\documentclass[11pt]{amsart}

\usepackage{amsfonts, amstext, amsmath, amsthm, amscd, amssymb}
\usepackage{epsfig, graphics, psfrag, overpic}
\usepackage{color}
\usepackage[
pdfborderstyle={},
pdfborder={0 0 0}]{hyperref}


 \textwidth 6.07in 
 \textheight 8.63in 
 \oddsidemargin 0.18in
 \evensidemargin 0.18in
 \topmargin -0.07in

\renewcommand{\setminus}{{\smallsetminus}}


\newcommand{\RR}{{\mathbb{R}}}

\newcommand{\bdy}{{\partial}}

\newcommand{\abs}[1]{{\left\vert #1 \right\vert}}
\newcommand{\G}{{\mathbb{G}}}
\newcommand{\GA}{{\mathbb{G}_A}}

\newcommand{\GRA}{{\mathbb{G}'_A}}
\newcommand{\GRB}{{\mathbb{G}'_B}}
\newcommand{\Gs}{{\mathbb{G}_{\sigma}}}
\newcommand{\Grs}{{\mathbb{G}'_{\sigma}}}

\theoremstyle{plain}
\newtheorem{theorem}{Theorem}
\newtheorem{corollary}[theorem]{Corollary}
\newtheorem{lemma}[theorem]{Lemma}

\newtheorem*{namedtheorem}{\theoremname}
\newcommand{\theoremname}{testing}

\theoremstyle{definition}

\newtheorem{remark}[theorem]{Remark}



\begin{document}
\title{Fiber detection for state surfaces}

\author{David Futer}

\address{Department of Mathematics, Temple University,
Philadelphia, PA 19122}

\email{dfuter@temple.edu}

\thanks{{Supported in part by NSF grant DMS--1007221.}}

\thanks{ \today}

\begin{abstract}
Every Kauffman state $\sigma$ of a link diagram $D(K)$ naturally defines a state surface $S_\sigma$ whose boundary is $K$. For a \emph{homogeneous} state $\sigma$, we show that $K$ is a fibered link with fiber surface $S_\sigma$ if and only if an associated graph $\Grs$ is a tree. As a corollary, it follows that for an adequate knot or link, the second and next-to-last coefficients of the Jones polynomial are the obstructions to certain state surfaces being fibers for $K$.

This provides a dramatically simpler proof of a theorem of the author with Kalfagianni and Purcell.
\end{abstract}

\maketitle

\section{Introduction}\label{sec:intro}

In the 1930s, Seifert gave an algorithm that starts with a link diagram $D(K)$ and produces an orientable surface whose boundary is $K$ \cite{seifert:algorithm}. The algorithm works as follows. For every crossing of $D$, smooth the diagram near the crossing by following an orientation on $K$. This gives a disjoint union of circles in the projection plane. These circles bound a number of  disks, disjointly embedded in the ball below the projection plane. Then, these disks can be jointed by half-twisted bands at the crossings to give a surface $S \subset S^3$, such that $\bdy S = K$.

Seifert's construction has a natural generalization. At each crossing, there are two possible smoothings, or \emph{resolutions} of the crossing, as depicted in Figure \ref{fig:splicing}. A \emph{Kauffman state}  is a choice of $A$-- or $B$--resolution at each crossing. As in Seifert's construction,  a state $\sigma$ gives rise to a union of circles in the projection plane. These circles bound disjoint disks, which can be joined by half-twisted bands to give a \emph{state surface} $S_\sigma$. See Figure \ref{fig:state-surface}.

\begin{figure}[h]
\begin{center}
\begin{overpic}{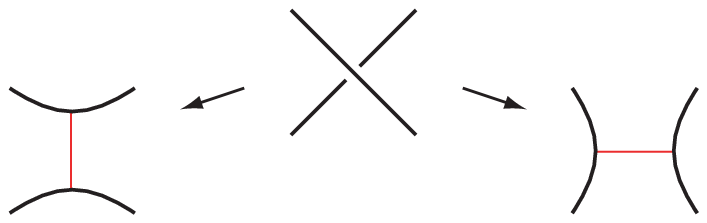}
\put(29,20){$A$}
\put(68,20){$B$}
\end{overpic}
\end{center}
\caption{$A$-- and
  $B$--resolutions at a crossing of $D$.\index{$A$--resolution}\index{$B$--resolution}}
\label{fig:splicing}
\end{figure}

Another  common example of a state surface is the two checkerboard surfaces of a diagram. The regions in the complement of $D(K)$ can be checkerboard colored, black and white. If the state $\sigma$ traces the boundaries of all the black regions, then joining these regions together produces the black checkerboard surface. Making the opposite choice for $\sigma$ produces the white surface. These checkerboard surfaces have been studied since the time of Tait in the 19th century; see Przytycki \cite{przytycki:survey} for a survey. In the special case of alternating diagrams, the checkerboard surfaces correspond to choosing the all--$A$ or all--$B$ state for $\sigma$.


The goal of this paper is to characterize when a state surface $S_\sigma$ is a fiber in a fibration of $S^3 \setminus K$ over $S^1$. For alternating diagrams, the strikingly simple answer is that the all--$A$ surface $S_A$ is a fiber if and only if $D$ is a connected sum of positive $2$--braids. (See Lemma \ref{lemma:basecase} below.) Stating our result in general requires a handful of definitions. 

If every crossing of $D(K)$ is resolved as in Figure \ref{fig:splicing}, according to a state $\sigma$, then the crossing point in the projection plane lies in a region complementary to the state circles of $\sigma$. These state circles partition the projection plane into regions. The state $\sigma$ is called \emph{homogeneous} if all crossings in the same region carry the same ($A$ or $B$) resolution. For example, the  state shown in Figure \ref{fig:state-surface} is homogeneous.  This notion was introduced by Cromwell \cite{cromwell:homogeneous} for the Seifert state, and easily extends to other states. 

\begin{figure}
\begin{center}
\begin{overpic}{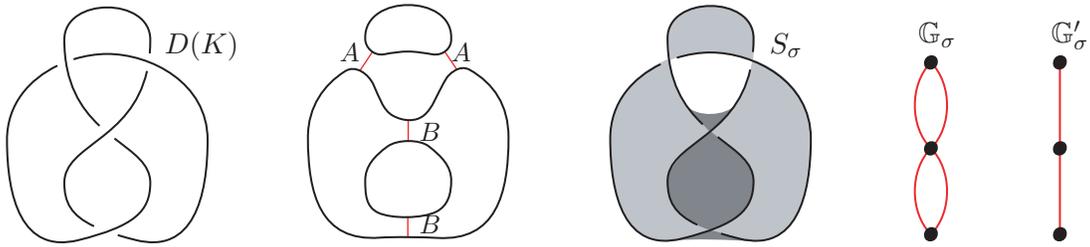}      
\put(15,18){$D(K)$}
\put(31.5,17.25){\small{$A$}}
\put(42,17.25){\small{$A$}}
\put(39,9.75){\small{$B$}}
\put(39,1){\small{$B$}}
\put(72,18){$S_\sigma$}
\put(86,19){$\Gs$}
\put(98.5,19){$\Grs$}
\end{overpic}
\end{center}
\caption{Left to right: a diagram $D(K)$. A homogeneous state $\sigma$, coming from Seifert's algorithm. The state surface $S_\sigma$ corresponding to $\sigma$. The graph $\Gs$ embeds into $S_\sigma$ as a spine. The reduced graph $\Grs$.}
\label{fig:state-surface}
\end{figure}

The choices that lead to a Kauffman state $\sigma$ can be conveniently encoded in a \emph{state graph} $\Gs$. This graph has one vertex for each state circle of $\sigma$. Each crossing $x$ of $D$ gives rise to an edge between the state circles at the resolution of $x$. (In Figures \ref{fig:splicing} and \ref{fig:state-surface}, these edges are shown in red, lighter than the link projection.) 
%
From the graph $\Gs$, we construct a reduced graph $\Grs$ by removing all duplicate edges between a pair of vertices. Our main result  is that  this reduced graph carries fibering information.

\begin{theorem}\label{thm:detection}
Let $\sigma$ be a homogeneous state of a link diagram $D(K)$. Then $S^3 \setminus K$ fibers over the circle with fiber $S_\sigma$ if and only if the reduced graph $\Grs$ is a tree.
\end{theorem}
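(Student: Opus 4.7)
The plan is to decompose the state surface $S_\sigma$ as an iterated Murasugi sum (plumbing) along the block decomposition of the state graph $\Gs$, apply the base case Lemma \ref{lemma:basecase} to each atomic piece, and assemble the conclusion using Gabai's theorem on fibered Murasugi sums.

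First I would show that whenever $\Gs$ has a cut vertex $v$, the corresponding state circle bounds a disk in the projection plane that sits inside $S_\sigma$ as a plumbing disk. Homogeneity of $\sigma$ is exactly what ensures that on the two sides of $v$ the crossings carry consistent (all-$A$ or all-$B$) resolutions, which in turn is what makes the cut along this disk realize a Murasugi sum decomposition in $S^3$. Iterating yields a decomposition of $S_\sigma$ as an iterated Murasugi sum of state surfaces $S_{\sigma|B}$, one per block $B$ of $\Gs$. Each sub-diagram $D_B$ corresponding to a block lies in a single region of the projection plane, and by homogeneity all of its crossings carry the same resolution; hence $D_B$ is an alternating diagram with $\sigma$ restricting to its all-$A$ (or, after mirror reflection, all-$B$) state.

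Next I would invoke the base case: $S_{\sigma|B}$ is a fiber for $D_B$ iff $D_B$ is a connected sum of positive $2$-braids (Lemma \ref{lemma:basecase}). Since $D_B$ corresponds to a single block of $\Gs$, it cannot be a nontrivial connected sum, so this happens iff $D_B$ is itself a single positive $2$-braid, iff the block $B$ consists of two vertices joined by parallel edges, iff $B$ reduces to a single edge in $\Grs$. Gabai's theorem asserts that a Murasugi sum is a fiber surface iff each summand is a fiber, so combining everything, $S_\sigma$ fibers $S^3 \setminus K$ iff every block of $\Grs$ is a single edge, iff $\Grs$ is a tree. (Connectedness of $\Grs$ either follows from $K$ being non-split, or is handled separately by noting that a disconnected $\Grs$ forces $S_\sigma$ to be disconnected and $K$ to split.)

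The main obstacle is the first step: locating the plumbing disks inside $S_\sigma$ and verifying that homogeneity is exactly the condition needed for the purely combinatorial block decomposition of $\Gs$ to be realized as a genuine geometric Murasugi sum decomposition of $S_\sigma$ in $S^3$. Care is also needed to check that cutting $S_\sigma$ along such a disk returns precisely the state surfaces of the two sub-diagrams on either side of the cut vertex, with their own state graphs being the corresponding sub-blocks of $\Gs$. Once this geometric decomposition is in place, the reduction to the alternating base case and the appeal to Gabai's theorem are essentially formal.
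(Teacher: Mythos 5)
Your proposal is correct and follows essentially the same route as the paper: decompose $S_\sigma$ as a Murasugi sum at the cut vertices of $\Gs$, reduce each summand to the prime alternating all--$A$/all--$B$ base case (the paper's Lemmas \ref{lemma:primealt} and \ref{lemma:basecase}), and assemble via Gabai's theorem --- the paper merely organizes this as an induction on the number of cut vertices of $\Grs$ rather than a one-shot block decomposition. The only point to tighten is your claim that each block sub-diagram is alternating because its crossings all carry the same resolution: that implication is false in general (any all--$A$ state has uniform resolutions), and the correct justification is the argument of Lemma \ref{lemma:primealt}, namely that a non-alternating strand in a homogeneous state forces a state circle with other circles on both sides, hence a cut vertex inside the block.
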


The topology of state surfaces was recently studied by Ozawa \cite{ozawa}. For homogeneous states, he showed that the surface $S_\sigma$ is essential in $S^3 \setminus K$ if and only the state $\sigma$ is \emph{adequate}, meaning that $\Gs$ has no $1$--edge loops (equivalently, $\Grs$ has no $1$--edge loops). Since a tree has no loops of any length, all the states where $S_\sigma$ is a fiber must be adequate. 

In the special case where $\sigma$ is the all--$A$ or all--$B$ state, the graphs $\GRA$ and $\GRB$ are particularly worthy of attention due to their connection to the Jones polynomial of $K$. This is a Laurent polynomial invariant of $K$, which can be written in the form 
$$J_K(t)= \alpha t^k+ \beta t^{k-1}+ \ldots + \beta' t^{m+1}+
\alpha' t^m, $$
so that the second and next-to-last coefficients of $J_K(t)$ are
$\beta$ and $\beta'$, respectively. Stoimenow \cite{stoimenow:coeffs} and Dasbach and Lin \cite{dasbach-lin:head-tail} have observed that if $D$ is $A$--adequate (meaning the all--$A$ state is adequate), then $\abs{\beta'} = 1 - \chi(\GRA)$. Similarly, if $D$ is $B$--adequate, then $\abs{\beta} = 1 - \chi(\GRB)$. As a result, the following corollary immediately follows from Theorem \ref{thm:detection}.

\begin{corollary}\label{cor:beta-fiber}
Let $K$ be a link that admits a connected, $A$--adequate diagram $D$. Then  $S^3 \setminus K$ fibers over $S^1$ with fiber the state surface $S_A = S_A(D)$ if and only if $\beta'=0$.
\end{corollary}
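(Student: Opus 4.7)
The plan is to deduce Corollary \ref{cor:beta-fiber} almost immediately from Theorem \ref{thm:detection} together with the Stoimenow--Dasbach--Lin formula $\abs{\beta'} = 1 - \chi(\GRA)$ cited in the text. First, I would note that the all--$A$ state is trivially homogeneous: every crossing receives the same ($A$) resolution, so the homogeneity condition that all crossings in a common region of the state circles share a resolution is vacuous. Thus Theorem \ref{thm:detection} applies, and $S^3 \setminus K$ fibers with fiber $S_A$ if and only if $\GRA$ is a tree.

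Next, I would reduce ``$\GRA$ is a tree'' to a statement purely about the Euler characteristic. Recall that a connected graph $G$ is a tree if and only if $\chi(G) = 1$; more generally, for any graph, $1 - \chi(G)$ is the first Betti number plus (number of components $-1$), which vanishes exactly when $G$ is a connected tree. So I need to check that $\GRA$ is connected under the hypothesis that $D$ is connected. This is the one small obstacle to address carefully: the state graph $\GA$ has a vertex for each state circle and an edge for each crossing, and adjacency of state circles along a crossing mirrors adjacency of the underlying regions of $D$. Connectedness of $D$ therefore forces $\GA$ to be connected, and passing to $\GRA$ by deleting duplicate edges preserves connectedness.

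Combining these observations, $\GRA$ is a tree if and only if $\chi(\GRA) = 1$, which by the formula $\abs{\beta'} = 1 - \chi(\GRA)$ is equivalent to $\beta' = 0$. Chained with Theorem \ref{thm:detection}, this gives the stated equivalence. I expect the only point requiring any care is the connectedness argument for $\GRA$; the rest is bookkeeping.
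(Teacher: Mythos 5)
Your proposal is correct and matches the paper's argument, which simply states that the corollary ``immediately follows'' from Theorem \ref{thm:detection} together with the Stoimenow--Dasbach--Lin formula $\abs{\beta'} = 1 - \chi(\GRA)$; you have merely made explicit the (correct) supporting observations that the all--$A$ state is homogeneous and that connectedness of $D$ forces $\GRA$ to be connected, so that $\chi(\GRA)=1$ characterizes tree-ness.
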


In other words, the next-to-last Jones coefficient $\beta'$ is precisely the obstruction to $S^3 \setminus K$ being fibered with fiber surface $S_A$. Similarly, for $B$--adequate links, the second coefficient $\beta$ is the obstruction to $S^3 \setminus K$ being fibered with fiber surface $S_B$. 

Ever since Jones introduced his knot polynomial in the 1980s, it has been a tantalizing open problem to understand exactly what this invariant and its generalizations say about the topology of knot and link complements. The first rigorous relations of this sort have appeared in the last few years, in the work of Dasbach--Lin \cite{dasbach-lin:volumish}, Garoufalidis \cite{garoufalidis:jones-slopes}, and Futer--Kalfagianni--Purcell \cite{fkp:volume, fkp:jones-slopes, fkp:gutsjp}. Corollary \ref{cor:beta-fiber} is one of the more striking connections between quantum knot invariants and classical geometric topology to have been found so far.

Theorem \ref{thm:detection} and Corollary \ref{cor:beta-fiber} were first proved in the author's joint work with Kalfagianni and Purcell \cite[Theorem 5.21 and Corollary 9.16]{fkp:gutsjp}. However, the proof of \cite[Theorem 5.21]{fkp:gutsjp} appears over 80 pages into the monograph. That proof relies on the detailed study of a polyhedral decomposition of $S^3 \setminus S_\sigma$, and much effort is expended to show that the polyhedra have desirable properties \cite[Chapters 2--4]{fkp:gutsjp}. This polyhedral decomposition is also used to gain a detailed understanding of $I$--bundles in $S^3 \setminus S_\sigma$ and the hyperbolic geometry of $S^3 \setminus K$.

By contrast, the proof of Theorem  \ref{thm:detection} contained in this paper is short and direct. As a consequence, one also obtains a short and readily digestible proof of Corollary \ref{cor:beta-fiber}.

Our proof builds up the surface $S_\sigma$ inductively via Murasugi sums (see Figure \ref{fig:murasugi}), applying results of Gabai \cite{gabai:natural} to deduce fibering information. This inductive approach is very similar in spirit to the methods used by Ozawa \cite{ozawa}. It is also fruitfully exploited in a recent preprint of Gir\~{a}o to prove a fibering criterion for augmented links \cite{girao:fibering}.

\section{Proof}\label{sec:proof}

Before beginning the main proof of Theorem \ref{thm:detection}, we wish to dismiss a few special cases.  If $D(K)$ depicts an unknot with no crossings, then $\Grs$ is a single vertex, and the spanning disk $S_\sigma$ is a fiber of the solid torus $S^3 \setminus K$. Thus the theorem holds trivially.  If $D(K)$ is a split diagram, then $\Grs$ is a disconnected graph (which cannot be a tree), and $S^3 \setminus K$ is reducible (hence cannot be fibered). Again, the theorem holds trivially  in this case.

If the state $\sigma$ is not adequate, i.e.\ $\G_\sigma$ has a $1$--edge loop, the reduced graph $\Grs$ cannot be a tree. In this case, the surface $S_\sigma$ is non-orientable, hence cannot be a fiber. Thus the theorem holds for non-adequate states.

For the remainder of the paper, we work under the assumptions that $D(K)$ is connected and has at least one crossing, and that the state $\sigma$ is adequate.
With these simplifying assumptions, the proof of Theorem \ref{thm:detection} proceeds by induction on the number of cut vertices in the graph $\Grs$. Recall that a \emph{cut vertex} is a vertex that separates $\Grs$.

\smallskip

The base case of the induction involves prime, alternating diagrams. Recall that a link diagram $D(K)$ is called \emph{prime} if, for every simple closed curve $\gamma$ in the projection plane that intersects $D(K)$ transversely in two points, one of the two sides of $\gamma$ contains no crossings. In other words, $D(K)$ fails to be prime precisely when it is the connected sum of two non-trivial diagrams.

\begin{figure}

\begin{overpic}{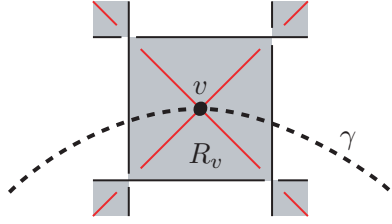}
\put(48,32){$v$}
\put(47,14){$R_v$}
\put(86,19){$\gamma$}
\end{overpic}

\caption{If $D(K)$ is an alternating diagram, and $v$ is a cut vertex of the checkerboard graph $\GA$, then the corresponding black region $R_v$ separates the diagram as a connected sum (with summands on opposite sides of $\gamma$).}
\label{fig:primeness}
\end{figure}

\begin{lemma}\label{lemma:primealt}
Let $\sigma$ be a homogeneous state of $D(K)$. Then $\Grs$ does not contain any cut vertices if and only if all of the following hold: $D(K)$ is prime and alternating, and $\sigma$ is the all--$A$ or all--$B$ state.
\end{lemma}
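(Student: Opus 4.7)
The plan is to analyze $\Gs$ using the region tree $T$ whose vertices are the components of $S^2 \setminus (\text{state circles of } \sigma)$ and whose edges are the state circles; this structure is a tree because disjoint Jordan curves in $S^2$ always produce a tree of complementary regions. Adequacy of $\sigma$ forces every leaf of $T$ to be an empty region: a leaf is a disk bounded by a single state circle $C$, and any crossing in it would yield a $1$-edge loop at $C$ in $\Gs$. Hence every $A$- or $B$-region has degree $\ge 2$ in $T$.

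For the easy direction ($\Leftarrow$), I assume $D$ is prime and alternating and $\sigma$ is the all-$A$ state (the all-$B$ case is symmetric). Then the state circles of $\sigma$ are precisely the boundaries of the $D$-regions of one checkerboard color, so $\Gs$ is the Tait graph of $D$ for that color. A classical fact says the Tait graph of a prime alternating diagram is $2$-vertex-connected, and this property is unaffected by collapsing parallel edges, so $\Grs$ has no cut vertex.

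For the converse ($\Rightarrow$), I argue the contrapositive in two stages. First, suppose $\sigma$ is homogeneous but uses both $A$- and $B$-crossings. I pick an $A$-region $R_A$ and a $B$-region $R_B$ in $T$ (both of degree $\ge 2$), walk the unique $T$-path between them, and let $C$ be the first state circle along this path whose $R_A$-side region has type $A$ but whose $R_B$-side region has a different type (either $B$ directly, or an empty region that still continues the path). Both regions adjacent to $C$ have degree $\ge 2$ in $T$, so both components of $T \setminus \{C\}$ contain additional state circles. Every edge of $\Gs$ arises from a crossing lying in a single region and connects two state circles bounding that region, so deleting $C$ splits $\Gs$ into two non-empty subgraphs with no edges between them; hence $C$ is a cut vertex. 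Therefore $\sigma$ must be the all-$A$ or all-$B$ state whenever $\Grs$ has no cut vertex.

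Second, assume $\sigma$ is the all-$A$ state and $\Grs$ has no cut vertex. Since cut vertices are unaffected by identifying parallel edges, $\Gs$ is also $2$-vertex-connected, and I view it as a $2$-connected plane multigraph in $S^2$ (each state circle contracted to its vertex, each crossing-edge drawn inside the region containing that crossing). The medial-graph construction then produces an alternating link diagram $D_{\text{alt}}$ whose shadow is the medial of $\Gs$ and whose Tait graph for one
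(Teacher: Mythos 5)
Your region-tree setup and the first stage of your converse are sound: the key mechanism --- a state circle $C$ with other state circles on both sides is a cut vertex of $\Gs$, because every edge of $\Gs$ lies inside a single complementary region and hence on one side of $C$ --- is exactly the one the paper uses, and your argument that a homogeneous state with both an $A$--region and a $B$--region must contain such a circle is correct. But the proof has a genuine gap in the remaining (and hardest) case. The second stage breaks off mid-sentence, and the strategy it sketches does not reach the conclusion: you assume $\sigma$ is the all--$A$ state with $\Grs$ cut-vertex free and pass to the medial of the plane graph $\Gs$. That medial is indeed an alternating diagram, but it need not be $D$. Many non-isotopic diagrams, including non-alternating ones, share the same all--$A$ state graph, and $D$ coincides with the medial of $\GA$ precisely when $D$ is alternating --- which is what you are trying to prove. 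What is missing is a direct argument that a non-alternating $D$ forces a cut vertex even when $\sigma$ is all--$A$; your stage-one argument is unavailable here because there are no $B$--regions to play against. The paper supplies exactly this step: a non-alternating diagram contains an over--over (or under--under) strand, and a homogeneous resolution there produces a state circle with state circles on both sides, hence a cut vertex. You also never address primeness: if $D$ is not prime, the curve realizing the connected sum meets one black and one white region with crossings on both sides, and whichever of these is a vertex of $\Gs$ is a cut vertex.

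A smaller issue: your ``if'' direction cites as a classical fact that the Tait graph of a prime alternating diagram is $2$--vertex-connected. The fact is true, but it is half the content of the lemma; the paper proves it directly (a cut vertex $v$ of $\GA$ gives a black region $R_v$ that separates the diagram, hence a simple closed curve exhibiting $D$ as a connected sum with crossings on both sides, contradicting primeness). You should either reproduce that short argument or give a precise reference rather than an appeal to folklore.
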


\begin{proof}
For the ``if'' direction, suppose that $D(K)$ is prime and alternating, and $\sigma$ is the all--$A$ or all--$B$ state. Without loss of generality, $\sigma$ is the all--$A$ state. Then $\Gs$ is the all--$A$ checkerboard graph of $D$.  The graph $\Gs = \GA$ naturally embeds as a spine of the (black) checkerboard surface $S_\sigma= S_A$, with one vertex in each black region of $D(K)$ and one edge running through each half-twisted band. 

Suppose, for a contradiction, that $v \in \GRA$ is a cut vertex. Then $v$ also separates the unreduced graph $\GA$. Since $\GA$ is embedded as a spine of the checkerboard surface $S_A$, the black region $R_v$ corresponding to $v$ must separate the checkerboard surface, hence also separate the diagram $D(K)$. In other words, there is a simple closed curve $\gamma$ in the projection plane such that the only black region met by $\gamma$ is $R_v$, and such that each component of $\RR^2 \setminus \gamma$ contains at least one crossing (these crossings correspond to edges of $\GA \setminus v$).  
This curve decomposes $D(K)$ as a connected sum, violating the hypothesis of primeness. See Figure \ref{fig:primeness}.

\begin{figure}[h]
\begin{overpic}{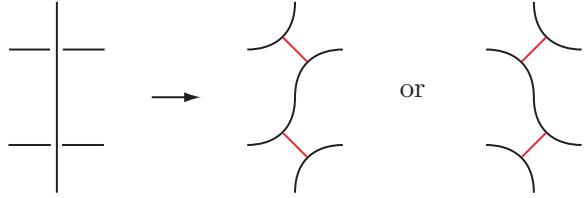}
\put(68,17){or}
\end{overpic}
\caption{If $D(K)$ is a non-alternating diagram, a homogeneous resolution of an over-over strand of $D$ must result in a state circle that has other circles on both sides. The same is true of an under-under strand.}
\label{fig:homogeneous}
\end{figure}

\smallskip

For the ``only if'' direction, suppose that $D(K)$ is not alternating. Since $\sigma$ is homogeneous,  some state circle of $\sigma$ (corresponding to a non-alternating segment of $D$) must have other state circles on both sides. See Figure \ref{fig:homogeneous}. The corresponding vertex of $\Gs$ will separate $\Gs$, and also $\Grs$. Thus, if $\Grs$ contains no cut vertices, $D(K)$ must be alternating.

If $D$ is alternating but $\sigma$ is not the all--$A$ or all--$B$ state, then some state circle fails to follow the boundary of a region of $D$. Just as in Figure \ref{fig:homogeneous}, this implies that this state circle has other circles on both sides, hence $\Grs$ has a cut vertex.

Finally, if $D$ is not prime, then there is a simple closed curve $\gamma$ that meets only one white region and only one black region, with crossings on both sides of $\gamma$. One of these regions corresponds to a vertex of $\Gs$ (depending on whether $\sigma$ is the all--$A$ or all--$B$ state), and this vertex separates $\Grs$. Therefore, if $\Grs$ has no cut vertices, $D(K)$ must be  prime and alternating, with $\sigma$ the all--$A$ or all--$B$ state.
\end{proof}

The base case of the induction is the following lemma.

\begin{lemma}\label{lemma:basecase}
Suppose $D(K)$ is a prime, alternating diagram with at least one crossing. Then the following are equivalent for the all--$A$ state: 
\begin{enumerate}
\item\label{i:gra-tree} $\GRA$ is a tree. 
\item\label{i:gra-stick} $\GA$ has exactly two vertices.
\item\label{i:braid} $D(K)$ is a positive $2$--braid.
\item\label{i:sa-fiber} The checkerboard surface $S_A$ is a fiber in $S^3 \setminus K$.
\end{enumerate}
The same equivalence holds for $S_B$, $\GRB$, and negative $2$--braids.
\end{lemma}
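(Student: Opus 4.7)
The plan is to prove the cycle of implications
$(\ref{i:gra-tree}) \Rightarrow (\ref{i:gra-stick}) \Rightarrow (\ref{i:braid}) \Rightarrow (\ref{i:sa-fiber}) \Rightarrow (\ref{i:gra-tree})$.

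The first two implications are combinatorial. For $(\ref{i:gra-tree}) \Rightarrow (\ref{i:gra-stick})$, Lemma \ref{lemma:primealt} tells us that $\GRA$ has no cut vertex; since any tree on three or more vertices has a cut vertex (every non-leaf is one), a tree with none has at most two vertices. Adequacy forbids $1$--edge loops in $\GA$ and $D$ has at least one crossing, so $\GA$ (and thus $\GRA$) has at least two vertices; hence $\GA$ has exactly two. For $(\ref{i:gra-stick}) \Rightarrow (\ref{i:braid})$, two state circles in the all--$A$ resolution joined by $c$ half-twisted bands is precisely the picture of the closure of a $2$--braid $\sigma_1^{e_1}\cdots\sigma_1^{e_c}$. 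The alternating hypothesis forces the $e_i$ to agree in sign (a sign change would destroy the over/under alternation along a strand), and with the standard convention this common sign is $+1$, so $D$ is the positive $2$--braid $\sigma_1^c$.

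For $(\ref{i:braid}) \Rightarrow (\ref{i:sa-fiber})$, the surface $S_A$ associated to the closure of $\sigma_1^c$ is obtained by iteratively plumbing positive Hopf bands together. Each positive Hopf band is the fiber of the exterior of the positive Hopf link, so Gabai's Murasugi--sum theorem \cite{gabai:natural} implies that $S_A$ is a fiber of $S^3 \setminus K$.

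The implication $(\ref{i:sa-fiber}) \Rightarrow (\ref{i:gra-tree})$ is the main obstacle. Suppose $S_A$ is a fiber. Then $S_A$ realizes the maximal Euler characteristic among Seifert surfaces for $K$. By the Crowell--Murasugi theorem, this maximum equals $s - c$, where $s$ is the number of Seifert circles in the canonical Seifert surface of $D$. Equating with $\chi(S_A) = V_A - c$ forces $V_A = s$, which means the Seifert resolution coincides with the $A$--resolution at every crossing; equivalently, every crossing of $D$ is positive. A final step then uses the constraints of primeness and alternating-ness together with positivity to show that $\GA$ has at most two vertices: in a positive diagram the Seifert circles are planar curves whose nesting relations form a tree, and crossings can only occur between immediately-nested levels, so $\GRA$ embeds as this nesting tree; since Lemma \ref{lemma:primealt} forbids cut vertices, the nesting tree has at most two vertices, so $\GRA$ is itself a tree.

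The analogous statement for $S_B$, $\GRB$, and negative $2$--braids follows by applying the entire argument to the mirror image of $D$.
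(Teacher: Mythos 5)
Your implications $\eqref{i:gra-tree} \Rightarrow \eqref{i:gra-stick} \Rightarrow \eqref{i:braid} \Rightarrow \eqref{i:sa-fiber}$ are fine and essentially match the paper: the first two are the same cut-vertex/counting arguments via Lemma \ref{lemma:primealt}, and for $\eqref{i:braid} \Rightarrow \eqref{i:sa-fiber}$ you plumb positive Hopf bands and invoke Gabai, where the paper simply cites the classical fact for $(2,n)$ torus links; both are standard and your version meshes nicely with the Murasugi-sum machinery used elsewhere in the paper. The problem is $\eqref{i:sa-fiber} \Rightarrow \eqref{i:gra-tree}$, which is exactly where the real content of the lemma lies, and your argument for it has two genuine gaps. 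First, from the Crowell--Murasugi equality you get $V_A = s$, i.e.\ the all--$A$ state and the Seifert state have the same \emph{number} of circles; this does not imply the two states \emph{coincide}, and the inference is false in general. For the standard figure-eight diagram one has $V_A = s = 3$, yet the $A$--state is not the Seifert state and the diagram is not positive. (The conclusion you want --- that the $A$--resolution is the Seifert resolution for some orientation --- is true when $S_A$ is a fiber, but it follows from the \emph{orientability} of a fiber surface, not from counting circles, and that argument needs to be made.)

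Second, the closing step asserts that in a positive diagram ``crossings can only occur between immediately-nested levels,'' so that $\GRA$ is the nesting tree of the Seifert circles. This is not true: two Seifert circles, neither of which contains the other, can share crossings (think of several disks sitting side by side in the plane, pairwise joined by positive bands), and the reduced Seifert graph of a positive diagram can contain cycles --- a triangle, for instance. If your claim were valid it would show that \emph{every} positive diagram has a tree for its reduced Seifert graph, which is absurd. So even after correctly establishing that $D$ is positive, you still owe an argument that a prime, alternating, positive diagram whose surface $S_A$ is a fiber must be a $2$--braid. The paper closes this gap with hard input: Menasco's theorem that a prime alternating non-$2$-braid diagram has hyperbolic complement, plus Adams's theorem that the checkerboard surfaces of such a link are quasifuchsian and hence not fibers (with an alternative route via Menasco's polyhedral decomposition sketched in the remark). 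Your instinct to replace this with an elementary genus count is understandable, but as written the elementary route does not close.
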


\begin{proof}
Suppose $\GRA$ is a tree. If it has only one vertex, then there are no edges, contradicting the hypothesis that $D(K)$ has crossings. If it has three or more vertices, some vertex will be separating, contradicting Lemma \ref{lemma:primealt}. Thus $\GA$ has exactly two vertices, giving $\eqref{i:gra-tree} \Rightarrow \eqref{i:gra-stick}$.

If $\GA$ has two vertices, then $D(K)$ has exactly two black regions, connected to each other at each crossing. This is the diagram of a positive $2$--braid. Conversely, if $D(K)$ is a positive $2$--braid, then $\GRA$ is a stick with two vertices, which is  a tree. Thus  $\eqref{i:gra-tree} \Leftrightarrow \eqref{i:gra-stick} \Leftrightarrow \eqref{i:braid}$.

For $\eqref{i:braid} \Rightarrow \eqref{i:sa-fiber}$, assume that $D(K)$ is a positive $2$--braid. Then $K$ is a $(2,n)$ torus knot, and it is well-known that $S_A$ is a fiber \cite{stallings:fibered}. 

%

For $\eqref{i:sa-fiber} \Rightarrow \eqref{i:braid}$, recall that if $D$ is alternating, prime, and not a $2$--braid, then $S^3 \setminus K$ admits a hyperbolic structure \cite{menasco:alternating}. Then, Adams shows that the checkerboard surface $S_A$ is not a fiber \cite[Theorem 1.9]{adams:quasifuchsian}. 
\end{proof}

\begin{remark}
There is also a more direct proof that $ \eqref{i:braid}  \Leftrightarrow \eqref{i:sa-fiber}$, which does not rely on hyperbolic geometry.  Instead, this alternate argument relies on Menasco's decomposition of $S^3 \setminus K$ into two ideal polyhedra \cite{menasco:polyhedra}. The $1$--skeleton of each polyhedron is isomorphic (as a planar graph) to the projection graph of $D(K)$. In particular, the faces of the polyhedra can be $2$--colored: the union of all the black faces is the checkerboard surface $S_A$, while the union of all the white faces is the checkerboard surface $S_B$. In particular, the manifold $S^3 \setminus S_A$ can be obtained by gluing the two polyhedra along white faces only.

For $\eqref{i:braid} \Rightarrow \eqref{i:sa-fiber}$, suppose $D$ is a positive $2$--braid. Then every white face is a bigon. In other words, every polyhedron is combinatorially a prism $P \times I$, where $P$ is an ideal polygon, and the lateral faces are the white bigons. The product structure of $P \times I$ extends as we glue the two polyhedra along their lateral faces, implying that $S^3 \setminus S_A \cong S_A \times I$, hence $S_A$ is a fiber.

For $\eqref{i:sa-fiber} \Rightarrow \eqref{i:braid}$, suppose  $S^3 \setminus S_A \cong S_A \times I$. Then, \cite[Lemma 4.17]{fkp:gutsjp} shows that Menasco's polyhedral decomposition must ``see'' this product structure: every white face must be a product of an ideal edge with $I$, i.e., an ideal bigon. (Note that when the proof of that lemma is applied to Menasco's well-known polyhedral decomposition, it becomes self-contained, and does not require the machinery developed in \cite{fkp:gutsjp}.) If every white $B$--region of $D(K)$ is a bigon, these bigons must be joined end to end, implying that $D(K)$ is a positive $2$--braid.
\end{remark}

We are now ready to complete the proof of the main theorem. 

\begin{figure}[b]

\begin{center}
\begin{overpic}{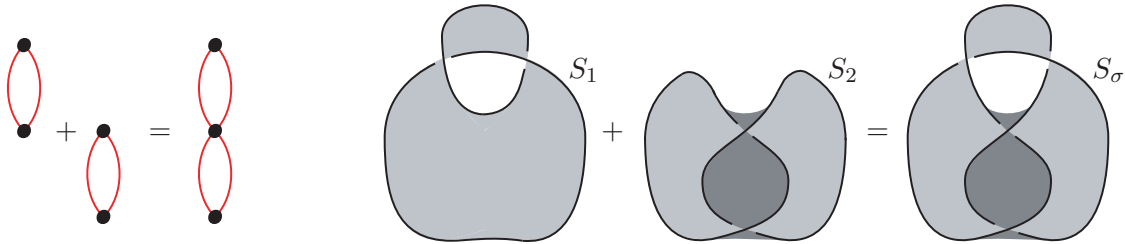}
\put(4.5,9.5){$+$}
\put(13,9.5){$=$}
\put(54,9.5){$+$}
\put(78,9.5){$=$}
\put(51,15){$S_1$}
\put(74.5,15){$S_2$}
\put(98.5,15){$S_\sigma$}
\end{overpic}
\end{center}

\caption{Left: The graph $\Gs$ decomposes as a union of subgraphs $\G_1$ and $\G_2$, joined along the cut vertex $v$. Right: The corresponding decomposition of $S_\sigma$ as a Murasugi sum of state surfaces $S_1$ and $S_2$.}
\label{fig:murasugi}
\end{figure}

\begin{proof}[Proof of Theorem \ref{thm:detection}]
We proceed by induction on $n$, where $n$ is the number of cut vertices in $\Grs$. For the base case, let $n=0$, and recall the running assumption that $D(K)$ has at least one crossing. Then Lemma \ref{lemma:primealt} says that the diagram $D(K)$ is prime and alternating, and $\sigma$ is the all--$A$ or all--$B$ state. By Lemma \ref{lemma:basecase}, $S_\sigma$ is a fiber if and only if $\Grs$ is a tree, as desired.

For the inductive step, suppose $n > 0$ and $v$ is a cut vertex of $\Grs$. Then $\Grs = \G'_1 \cup_v \G'_2$, where $\G'_1$ and $\G'_2$ are subgraphs that are disjoint except at $v$. The unreduced graph $\Gs$, which has the same adjacency relations as $\Grs$, also decomposes as a union of subgraphs $\G_1$ and $\G_2$ that are disjoint except at $v$. See Figure \ref{fig:murasugi}, left.
On the diagrammatic side, $D(K)$ decomposes as the Murasugi sum of diagrams $D(K_1)$ and $D(K_2)$, and the state surface $S_\sigma$ decomposes as the Murasugi sum of state surfaces $S_1$ and $S_2$. See Figure \ref{fig:murasugi}.

Note that the Kauffman state $\sigma_i$ that gives rise to $S_i$ is obtained by restricting $\sigma$ to the crossings of $D(K_i)$. Thus each $S_i$ is the state surface of a homogeneous state, with reduced graph $\G'_i$. Note as well that each $\G'_i$ has fewer than $n$ cut vertices. Thus, by the inductive hypothesis, $S_i$ is a fiber in $S^3 \setminus K_i$ if and only if $\G'_i$ is a tree.

Now, we recall Gabai's theorem \cite{gabai:natural}: $S_\sigma$ is a fiber if and only if each $S_i$ is a fiber. Clearly, $\Grs$ is a tree if and only if each $\G'_i$ is a tree. This completes the proof.
\end{proof}

\bibliographystyle{hamsplain}
\bibliography{biblio.bib}

\end{document}